\newcommand{\COLORON}{0}
\newcommand{\NOTESON}{0}
\newcommand{\Debug}{0}
\newcommand{\comment}[1]{}
\newcommand{\COMMENT}[1]{}
\definecolor{darkgray}{rgb}{0.3,0.3,0.3}
\newcommand{\defi}[1]{{\color{darkgray}\emph{#1}}}
\newtheorem{proposition}{Proposition}[section]
\newtheorem{theorem}[proposition]{Theorem}
\newtheorem{corollary}[proposition]{Corollary}
\newtheorem{lemma}[proposition]{Lemma}
\newtheorem{examp}[proposition]{Example}%[section]
\newcommand{\FIG}{0}
\newcommand{\note}[1]{ 

\hspace*{-30pt}
	{\color{blue}  NOTE: \color{Turquoise}{\small  \tt \begin{minipage}[c]{1.1\textwidth}  #1 \end{minipage} \ignorespacesafterend }} 
	
	}
\else \newcommand{\note}[1]{} \fi
\newcommand{\afsubm}[1]{ \ifnum \Debug = 1 {\mymargin{#1}}
\fi} %For notes on after-submission changes
\renewcommand{\color}[1]{}
\newcommand{\N}{\ensuremath{\mathbb N}}
\newcommand{\R}{\ensuremath{\mathbb R}}
\newcommand{\Z}{\ensuremath{\mathbb Z}}
\DeclareRobustCommand{\cev}[1]{%
  \mathpalette\do@cev{#1}%
}
\newcommand{\do@cev}[2]{%
  \fix@cev{#1}{+}%
  \reflectbox{$\m@th#1\vec{\reflectbox{$\fix@cev{#1}{-}\m@th#1#2\fix@cev{#1}{+}$}}$}%
  \fix@cev{#1}{-}%
}
\newcommand{\fix@cev}[2]{%
  \ifx#1\displaystyle
    \mkern#23mu
  \else
    \ifx#1\textstyle
      \mkern#23mu
    \else
      \ifx#1\scriptstyle
        \mkern#22mu
      \else
        \mkern#22mu
      \fi
    \fi
  \fi
}
\newcommand{\g}{\ensuremath{G\ }}
\renewcommand{\Pr}{\mathbb{P}}
\newcommand{\Lr}[1]{Lemma~\ref{#1}}
\newcommand{\Tr}[1]{Theorem~\ref{#1}}
\newcommand{\Trs}[1]{Theorems~\ref{#1}}
\newcommand{\Sr}[1]{Section~\ref{#1}}
\newcommand{\Prr}[1]{Pro\-position~\ref{#1}}
\newcommand{\labtequ}[2]{%\labtequc{#1}{#2}}
 \begin{equation} \label{#1} 	\begin{minipage}[c]{0.9\textwidth}  #2 \end{minipage} \ignorespacesafterend \end{equation} }
\newcommand{\mymargin}[1]{% <- dieses % verhindert ein ungewolltes Leerzeichen
 \ifnum \Debug = 1
  \marginpar{%
    \begin{minipage}{\marginparwidth}\small%
      \begin{flushleft}%
        {\color{blue}#1}%
      \end{flushleft}%
   \end{minipage}%
  }%
 \fi
}%
\newcommand{\mySection}[2]{}
\newcommand{\myremark}[1]{\ifnum \Debug = 1 \tiny #1 \fi}
\newcommand{\pint}{interface}
\newcommand{\ar}[1]{\vec{#1}}
\newcommand{\pcs}{\ensuremath{\dot{p}_c}}
\title{On the exponential growth rates of lattice animals and interfaces II:  new asymptotic bounds}
\author[1]{Agelos Georgakopoulos}
\author[2]{Christoforos Panagiotis}
\affil[1,2]{{Mathematics Institute}\\
{University of Warwick}\\
{CV4 7AL, UK}\thanks{Supported by the European Research Council (ERC) under the European Union's Horizon 2020 research and innovation programme (grant agreement No 639046).}\\}
\affil[2]{{Universit{\'e} de Gen{\`e}ve}\\
{Section de Math{\'e}matiques}\\
{rue du Conseil-G{\'e}n{\'e}ral 7-9}\\
{1205 Geneva, Switzerland}}
\begin{document}
\date{}
\maketitle

\begin{abstract}
We introduce a method for translating any upper bound on the percolation threshold  of a lattice \g into a lower bound on the exponential growth rate $a(G)$ of lattice animals  and vice-versa. We exploit this in both directions.  We improve on the best known asymptotic lower and upper bounds on $a(\Z^d)$ as $d\to \infty$. We use percolation as a tool to obtain the latter, and conversely we use the former to obtain lower bounds on $p_c(\Z^d)$. We obtain the rigorous lower bound $\dot{p}_c(\Z^3)>0.2522$ for 3-dimensional site percolation.
\end{abstract}

\section{Introduction}

A \defi{lattice animal} is a connected subgraph $S$ of the hypercubic lattice $\Z^d$. If $S$ is an induced subgraph, which means that it contains every edge of $\Z^d$ with both end-vertices in $S$, then it is called a \defi{lattice site-animal} or \defi{polycube}. Alternatively, a polycube can be defined as a connected set of cubical cells in $\Z^d$. The counts of lattice (site-)animals of size $n$, and their asymptotics as $n$ and $d$ goes to infinity, have been extensively studied by scholars in statistical mechanics as well as  combinatorics and computer science \cite{SiteAnimals,BareShal,DelTai,GP,HammondExpRates,Harr82, PG95,LatticeTrees,LatticeTreesTerms}, both in $\Z^d$ and  other lattices \cite{BaRoShaImp,BaShaZheImp,RanWelAni}. A lot of the motivation comes from the study of random configurations in $\Z^d$, a central theme in many models of statistical mechanics. 

The exact count $a_n(\Z^d)$ of $d$-dimensional lattice animals of size $n$ containing the origin is very difficult to come by even in 2 dimensions, and so the mainstream focuses on their exponential growth rates $a(\Z^d):=\lim_{n\to\infty} {a_n(\Z^d)}^{1/n}$. These have important interactions with statistical mechanics models such as percolation theory, the present paper being an instance of this interaction. Some precise asymptotic expansions for $a(\Z^d)$ and its site-counterpart $\dot{a}(\Z^d)$ were reported in the physics literature \cite{GP,Harr82, PG95} but without any rigorous bounds on the error terms. Miranda and Slade \cite{LatticeTrees,LatticeTreesTerms} determined the first three terms of the $1/d$ expansion of $a(\Z^d)$ rigorously. 

Much less is known about  $\dot{a}(\Z^d)$.
Barequet, Barequet and Rote in \cite{SiteAnimals}  proved that $\dot{a}(\Z^d)=2de-o(d)$. Peard and Gaunt had previously made involved, but nonrigorous, calculations that yield $\dot{a}(\Z^d)=2de-3e+O(1/d)$ \cite[(2.22)]{PG95}, and \cite{SiteAnimals} expressed the belief that this is correct. 
Our first result is that this prediction is indeed right as a lower bound (\Tr{gap}). We deduce this from a recent bounds of Heydenreich and Matzke \cite{HeyMat} on the site percolation threshold 
$\pcs(\Z^d)$, obtained using an involved technique called lace expansion. (The dot in  $\pcs(\Z^d),\dot{a}(\Z^d)$ etc.\ means that we are considering site percolation, or lattice site-animals; most of our results have a bond and a site version.) To do so, we exploit the following formula  that allows us to translate any upper bound on the percolation threshold of a `lattice' \g into a lower bound on the exponential growth rate $a(G)$ of lattice animals (and other creatures) and vice-versa: 
\labtequ{a ge frp}{$\dot{a}(G) \geq f(r(\dot{p}_c(G)))$,}
where $f(r):= \frac{(1+r)^{1+r}}{r^r}$ and $r(p):=\frac{1-p}{p}$ are universal  functions. This formula is proved and discussed in the companion paper \cite{brI}.
%This formula was available before \cite{DelTai}, but the fact that it can outperform bounds on $\dot{a}(\Z^d)$ obtained by purely combinatorial means appears to be new.

The aforementioned upper bound of \cite{SiteAnimals} was improved to $\dot{a}(\Z^d)\leq 2de-2e+1/(2d-2)$ in simultaneous work by Barequet and Shalah \cite{BareShal}. We improve this further asymptotically to $\dot{a}(\Z^d)\leq 2de-5e/2+O(1/\log(d))$ (\Tr{improved bounds}). For this we use direct combinatorial arguments that do not involve percolation. We can then plug these bounds into \eqref{a ge frp} to obtain the bounds\\ $\pcs(\Z^d)\geq \dfrac{1}{2d}+\dfrac{2}{(2d)^2}-O(1/d^2\log(d))$ (\Tr{thm pc lower}). This bound was improved by Heydenreich and Matzke \cite{HeyMat}  shortly after the first draft of our work appeared, see \eqref{site perco asym rig}.
%Asymptotic expansions for $p_c(\Z^d)$ and $\dot{p}_c(\Z^d)$ have been obtained in \cite{HofstadSlade} and \cite{HeyMat}; see also the references therein. The above bound offers a slight improvement. \mymargin{Right? Ans: no the bound of HeyMat is better}

\medskip
Certain sub-families of lattice (site-)animals are of interest as well. The \defi{lattice trees} in particular, i.e.\ the subtrees of $\Z^d$,  have been studied  \cite{AlekBar, LatticeTrees,LatticeTreesTerms} and the first three terms of the $1/d$-expansion of their exponential growth rates  $t(\Z^d)$ are known \cite{LatticeTreesTerms}.
We are interested in an intermediate sub-species, called (lattice) \defi{interfaces}, a family of lattice (site-)animals containing the lattice trees. We introduced our notion of interfaces in \cite{analyticity}, where they played a central role in proving many results about Bernoulli percolation. In the companion paper \cite{brI} we focus on their exponential growth rates $b(\Z^d)$ and $\dot{b}(\Z^d)$, and this paper continues this study: we determine the first  terms of their $1/d$-expansion (\Trs{asymptotics} and~\ref{a Zd}).

% turns out to be a bit easier, and we obtain the analogous $a(\Z^d)=2de-\dfrac{3e}{2}-O(1/d)$ using an asymptotic expansion for $p_c(\Z^d)$ for bond percolation obtained rigorously in \cite{HaraSlade,HofstadSlade} using lace expansion.

%With a simple calculation based on \Tr{S_o} we also obtain, in \Sr{polyiamonds}, a lower bound on the growth rate of the site animals in the hexagonal lattice $\mathbb{H}$ (aka.\ polyiamonds) that would have improved the state-of-the-art a year ago, but relies however on a non-rigorous upper bound on $\dot{p}_c(\mathbb{H})$. %This technique applies to all planar lattices.
 %%%%%%%%

%See also \cite{AlekBar} for a conjecture on the asymptotic expansion of $\dot{t}(\Z^d)$.

\medskip
In this paper we used percolation as a tool to bound $\dot{a}(\Z^d)$ from above. Another method was introduced by Eden \cite{Eden61} using more direct counting arguments. This method was enhanced by Klarner and Rivest \cite{KlaRive} in the case of $\Z^2$, who obtained that $\dot{a}(\Z^2)\leq 4.6496$, and more recently by Barequet and Shalah \cite{BareShal}, who obtained the asymptotic inequality $\dot{a}(\Z^d)\leq 2de-2e+1/(2d-2)$. In dimension $3$, the same paper proves $\dot{a}(\Z^3)<9.3835$. Plugging this into \eqref{bounds}, we deduce 
$\dot{p}_c(\Z^3)> 0.2522$, which is the best rigorous lower bound known.

\section{Preliminaries}

A \defi{lattice animal} $S$ is a connected subgraph of the hypercubic lattice $\Z^d$ containing a fixed vertex $o$. The (edge) boundary $\partial_E S$ of $S$ is the set of edges of $\Z^d$ that have at least one endvertex in $S$ but are not contained in $S$.  %A \defi{lattice tree} in $\Z^d$ is a lattice animal that is also a tree. 
Let $a_n(\Z^d)$ be the number of all lattice animals of $\Z^d$ with $n$ edges. %, and let $t_n(\Z^d)$ be the number of all lattice trees of $\Z^d$ with $n$ edges. 
It is well known that  $a(\Z^d):=\lim_{n\to\infty} {a_n(\Z^d)}^{1/n}$ exists %and $t(\Z^d):=\lim_{n\to\infty} {t_n(\Z^d)}^{1/n}$  
\cite{Klarner,Klein}. 

A \defi{lattice site-animal} $S$ is a set of vertices of $\Z^d$ containing $o$ that spans a connected graph. Thus every lattice site-animal is a lattice animal. The (vertex) boundary $\partial_V S$ of $S$ is the set of vertices of $\Z^d$ that have a neighbour in $S$ but are not contained in $S$. %A \defi{lattice site-tree} is a lattice site-animal that spans a tree. 
Let $\dot{a}_n(\Z^d)$ be the number of all lattice site-animals of $\Z^d$ with $n$ vertices. We let $\dot{a}(\Z^d):=\lim_{n\to\infty} {\dot{a}_n(\Z^d)}^{1/n}$.%; it is well known that this limit exists as well. \mymargin{Reference? or say proof is similar}

%We recall that interfaces and site-interfaces are a subsets of lattice animals and lattice site-animals, respectively. 

As already mentioned, we are interested in a sub-family of lattice\\ (site-)animals, called \defi{(site-)interfaces}, which we introduced in \cite{analyticity}, where they played a central role in proving many results about percolation. The intuition behind the notion is that $P\subset \Z^d$ is called an interface, if there is a percolation configuration in which the cluster $C_o$ of the origin is finite, and $P$ is the subgraph of $C_o$ separating it from infinity. The precise definition, which allows $P$ to be unambiguously determined by $C_o$, is rather involved, and can be found in \cite{analyticity} or the companion paper \cite{brI}. In the rest of this section we will recall the properties of interfaces that are relevant for this paper, so that the reader can follow our statements and proofs without the omitted technical details. We remark in passing that the definition of interfaces depends on the choice of a basis of the cycle space of $\Z^d$. When the full cycle space is chosen as a basis, for example, then lattice (site-)animals coincide with (site-)interfaces. But usually the basis we choose is the one consisting of all the 4-cycles of $\Z^d$, which leads to much thinner interfaces. To illustrate this point, we remark that for this choice of basis, interfaces satisfy the following geometric property in dimension $2$. Each edge of an interface $P$ is incident to the unbounded face of $P$, where now we view $P$ as a plane graph with its natural embedding inherited from $\Z^2$. In fact, in this specific case, interfaces can be defined as the set of those connected graphs that satisfy the latter property. %\mymargin{I added this comment because if we really want to compute more terms of the asymptotic expansion, we need to fix a basis of the cycle space. It is also related to what I wrote on page 7 and your comment "do we need this" in page 9.} 

\mymargin{removed: Lattice (site-)trees are a special species of (site-) interfaces, and the latter are a special species of lattice (site-)animals.}
Another important feature is that to each interface $P$ we associate a \defi{boundary $\partial P$}. Each edge in $\partial P$ has a common endvertex with some edge in $P$, but no edge in $\partial P$ is contained in $P$. In other words, $\partial P$ is contained in the set $\partial_E P$ defined above. However, it is possible that $\partial P$ is a proper subset of $\partial_E P$. To illustrate this, we remark that in dimension $2$ for example, if the basis of the cycle space we choose is the one consisting of all the 4-cycles of $\Z^2$, then $\partial P$ can be defined as the set of those edges of $\partial_E P$ that lie in the unbounded face of $P$. The precise definition $\partial P$ is again rather involved, and the interested reader can find it in \cite{analyticity} or \cite{brI}, but the above properties are all that we will need in this paper. Similarly, each site-interface $P$ has its own boundary, which is denoted for convenience by $\partial P$, and it is contained in $\partial_V P$. %We will also use \eqref{equality}, \eqref{b_r f_r ineq} and \eqref{bounds} below that are proved in the companion paper \cite{brI}.

In analogy to the case of lattice animals and lattice site-animals, we define $c_n(\Z^d)$ and $\dot{c}_n(\Z^d)$ to be the number of interfaces and site-interfaces, respectively, of size $n$. Here the \defi{size} $|P|$ refers to the number of edges in the case of interfaces, and the number of vertices in the case of site-interfaces. Then we define $b(\Z^d):=\lim_{n\to\infty} {c_n(\Z^d)}^{1/n}$ and $\dot{b}(\Z^d):=\lim_{n\to\infty} {\dot{c}_n(\Z^d)}^{1/n}$.  As we observed in \cite{brI}, it is more fruitful to parametrize the exponential growth rate of (site-)interfaces according to their `volume-to-surface ratio'. For a possible `size' $n\in \N$, `volume-to-surface ratio' $r \in \R_+$, and `tolerance' $\epsilon \in \R_+$, we let $c_{n,r,\epsilon}(\Z^d)$ denote the number of interfaces $P$ with $|P|=n$ and $(r-\epsilon)n \leq |\partial P| \leq (r+\epsilon)n$. These numbers grow exponentially in $n$, and we define $b_r$ to be their exponential growth rate as $\epsilon \to 0$:
$$b_r=b_r(\Z^d):= \lim_{\epsilon \to 0} \limsup_{n\to\infty} {c_{n,r,\epsilon}(\Z^d)}^{1/n}.$$ 
The site variant $\dot{b}_r$ is defined analogously.
It is not hard to prove (see \cite[\Prr{equality}]{brI}) that 
\begin{equation}\label{equality}
b(\Z^d)=\max_r b_r(\Z^d).
\end{equation}

%Interfaces are objects that appear in the context of percolation theory when considering problems such as the analyticity of the percolation function \cite{}. 
The function $b_r$ has strong implications for the behaviour of percolation on the lattice at hand. In particular, as we observed in \cite[Theorem 1.2]{brI}, one can determine whether the probability that an interface of size $n$ occurs in a percolation configuration of parameter $p$ decays exponentially by estimating the value of $b_r$. Indeed, one has the dimension-independent bounds 
\begin{equation}\label{b_r f_r ineq}
b_r\leq f(r)
\end{equation}
where $f(r):= \frac{(1+r)^{1+r}}{r^r}$, with equality if and only if the latter probability does not decay exponentially in $n$ for $p=p(r)=\frac{1}{1+r}$.

\medskip
This observation allows us to translate any upper bound on $\dot{p}_c(\Z^d)$ into a lower bound on $\dot{a}(\Z^d)$, and conversely any upper bound on $\dot{a}(\Z^d)$ into a lower bound on $\dot{p}_c(\Z^d)$.
Indeed, letting $r(p):= \frac{1-p}{p}$ (the inverse of the function $p(r)$ appearing above), we have \cite[Proposition 4.6]{brI}
\labtequ{bounds}{${a}(\Z^d) \geq {b}(\Z^d) \geq b_{r({p}_c)}(\Z^d) = f(r({p}_c(\Z^d)))$,} 
where the two inequalities are obvious from the definitions (\pint s are a species of lattice animal), and the equality is due to the fact that \eqref{b_r f_r ineq} holds with equality at $p_c$, i.e.\ for $r=r(p_c)$, as the aforementioned exponential decay fails there. To translate bounds on ${p}_c(\Z^d)$ into bounds on ${a}(\Z^d)$ or $b(\Z^d)$ and vice-versa, we just remark that $f(r)$ is monotone increasing in $r$, and $r(p)$ is monotone decreasing in $p$. Inequality \eqref{bounds} and the above reasoning applies verbatim to the site versions $\dot{p}_c(\Z^d)$ and $\dot{a}(\Z^d)$.

In two dimensions we cannot hope to get close to the real value of $\dot{a}(\Z^d)$ with this technique, as we are only enumerating the subspecies of site-\pint s\footnote{Still, for the hexagonal (aka. honeycomb) lattice $\mathbb{H}$, the best known lower bound was $\dot{a}(\mathbb{H})\geq 2.35$ \cite{BaRoShaImp,RanWelAni}, until this was recently improved to $\dot{a}(\mathbb{H})\geq 2.8424$ \cite{BaShaZheImp}. Plugging a numerical value for $\dot{p}_c(\mathbb{H})$, for which the most pessimistic (i.e.\ highest) estimate currently available is about $0.69704$ \cite{Jacob14}, we obtain $\dot{a}(\mathbb{H})\geq 2.41073$. If those approximations were rigorous, this would have improved the bounds of \cite{BaRoShaImp,RanWelAni}.}. But as we will see in the next section, our lower bounds become asymptotically tight as the dimension $d$ tends to infinity. In \Sr{sec upper} we will argue conversely: we will prove upper bounds on $\dot{a}(\Z^d)$ and plug them into \eqref{bounds} to obtain lower bounds on $\dot{p}_c(\Z^d)$.

\section{Bounds on growth rates of lattice animals and interfaces}\label{intro animals}

Our first result provides the first terms of the $1/d$ asymptotic expansion of the exponential growth rate of \pint s:

\begin{theorem}\label{asymptotics}
The exponential growth rate of the number of \pint s of $\Z^d$ satisfies $b(\Z^d)=2de-\dfrac{3e}{2}-O(1/d)$.
\end{theorem}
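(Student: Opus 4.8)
The plan is to prove the two matching bounds $2de-\tfrac{3e}{2}-O(1/d)\le b(\Z^d)\le 2de-\tfrac{3e}{2}+O(1/d)$ and read off the equality. For the lower bound I would start from \eqref{bounds}, which already gives $b(\Z^d)\ge f(r(p_c(\Z^d)))$ (the equality $b_{r(p_c)}=f(r(p_c))$ there is all I need). Because $f$ is increasing in $r$ and $r(p)=(1-p)/p$ is decreasing in $p$, the map $p\mapsto f(r(p))$ is decreasing, so a rigorous \emph{upper} bound on the bond threshold produces a lower bound on $b$. I would feed in the lace-expansion asymptotics $p_c(\Z^d)=(2d)^{-1}+(2d)^{-2}+O((2d)^{-3})$ (Hara--Slade, van der Hofstad--Slade), whence $r(p_c)=1/p_c-1=2d-2+O(1/d)$. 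It then remains to expand $f$: from $\ln f(r)=(1+r)\ln(1+r)-r\ln r=\ln r+1+\tfrac{1}{2r}+O(1/r^2)$ one gets $f(r)=er+\tfrac{e}{2}+O(1/r)$, so that $f(r(p_c(\Z^d)))=2de-\tfrac{3e}{2}+O(1/d)$, which delivers the lower bound.

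For the upper bound I would use that every \pint\ is a lattice animal, so $b(\Z^d)\le a(\Z^d)$, and invoke the rigorous $1/d$-expansion of Miranda and Slade, whose first two terms read $a(\Z^d)=2de-\tfrac{3e}{2}+O(1/d)$. This immediately yields $b(\Z^d)\le 2de-\tfrac{3e}{2}+O(1/d)$, and combining the two bounds finishes the proof. Since lattice trees are themselves \pint s, one could alternatively squeeze $b$ between the tree and animal growth rates, provided the tree expansion also opens with $2de-\tfrac{3e}{2}$; the percolation route has the advantage of being insensitive to this and of showcasing the method of \eqref{bounds}.

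The algebra above is routine, so the main obstacle is conceptual and lies in the two second-order inputs having to agree. One needs the $(2d)^{-2}$-coefficient of $p_c$ to be exactly $1$ (so that $r(p_c)=2d-2+O(1/d)$ and the percolation lower bound lands precisely on $-\tfrac{3e}{2}$) and the $1/d$-coefficient of $a(\Z^d)$ to be exactly $-\tfrac{3e}{2}$; both rest on genuinely deep external expansions. What makes the theorem work is precisely that the percolation lower bound $f(r(p_c))$ and the animal upper bound $a(\Z^d)$ coincide to two orders, i.e.\ the gap $a(\Z^d)-f(r(p_c(\Z^d)))$ is only $O(1/d)$ — equivalently, in high dimension trees, \pint s and animals share the first two terms of their $1/d$-expansions. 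If instead one wanted a self-contained argument not relying on Miranda and Slade, the genuinely hard step would be proving $b(\Z^d)\le 2de-\tfrac{3e}{2}+O(1/d)$ by directly enumerating interfaces, that is, counting connected edge sets together with their boundaries sharply enough to capture the second term.
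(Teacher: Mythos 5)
Your proposal is correct, and the lower bound is exactly the paper's: both use \eqref{bounds} to get $b(\Z^d)\ge b_{r(p_c)}(\Z^d)=f(r(p_c(\Z^d)))$, feed in the Hara--Slade/van der Hofstad--Slade expansion of $p_c(\Z^d)$ to get $r(p_c)=2d-2-O(1/d)$, and expand $f(r)=er+\frac{e}{2}-O(1/r)$. Where you genuinely diverge is the upper bound. You import Miranda and Slade's rigorous expansion of $a(\Z^d)$ as a black box and use $b(\Z^d)\le a(\Z^d)$; the paper instead gives a short self-contained argument: a degree-counting estimate shows $|\partial P|\le (2d-2)|P|+2d$ for every \pint\ $P$, so $b_r=0$ for $r>2d-2$, and then \eqref{b_r f_r ineq}, monotonicity of $f$, and \eqref{equality} give $b(\Z^d)\le f(2d-2)=\frac{(2d-1)^{2d-1}}{(2d-2)^{2d-2}}=2de-\frac{3e}{2}-O(1/d)$. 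This difference matters for the paper's architecture: having proved \Tr{asymptotics} without Miranda--Slade, the paper immediately deduces the first two terms of $a(\Z^d)$ (\Tr{animals}) via Kesten's argument, i.e.\ it \emph{reproves} the Miranda--Slade result with a shorter proof --- a byproduct your route forfeits, and indeed your dependency runs in the opposite direction (not circularly, since Miranda--Slade's lace-expansion proof is independent, but the appeal to it is a much heavier input than needed). Relatedly, your closing assessment that a self-contained upper bound would require ``counting connected edge sets together with their boundaries sharply enough to capture the second term'' overestimates the difficulty: the paper's trick is precisely that no sharp enumeration is needed, because the universal bound $b_r\le f(r)$ concentrates all the work, and evaluating $f$ at the extremal volume-to-surface ratio $2d-2$ already lands on $2de-\frac{3e}{2}-O(1/d)$ automatically.
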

\begin{proof}
We claim that for any \pint\ $P$ of $\Z^d$ we have $|\partial P|\leq (2d-2)|P|+2d$.
Indeed, summing vertex degrees gives $\sum_{u\in V(P)} deg(u)\geq 2|P|+|\partial P|$, where $deg(u)$ is the degree of $u$ in the graph $P\cup \partial P$, because the edges of $P$ are counted twice, and the edges of $\partial P$ are counted at least once. Since $deg(u)\leq 2d$ and $|V(P)|\leq |P|+1$, we get
$$2|P|+|\partial P|\leq \sum_{u\in V(P)} deg(u)\leq 2d |V(P)|\leq 2d|P|+2d.$$ By rearranging we obtain the desired inequality. It follows that $b_r=0$ for every $r>2d-2$ which combined with \eqref{b_r f_r ineq} and the fact that $f(r)$ is an increasing function of $r$ gives 
$$b_r(\Z^d)\leq \dfrac{(2d-1)^{(2d-1)}}{(2d-2)^{(2d-2)}}$$
for $r\geq 0$. 
Using \eqref{equality} we obtain that
\begin{align} \label{bZ ineq}
b(\Z^d)\leq \dfrac{(2d-1)^{(2d-1)}}{(2d-2)^{(2d-2)}}.
\end{align}

Notice that for every $r>0$, $$\dfrac{(1+r)^{1+r}}{r^r}=(1+r)\Big(1+\dfrac{1}{r}\Big)^r=(1+r)\exp\Big(r\log\Big(1+\dfrac{1}{r}\Big)\Big).$$
Using the Taylor expansion $\log\Big(1+\dfrac{1}{r}\Big)=\dfrac{1}{r}-\dfrac{1}{2r^2}+\dfrac{1}{3r^3}-O(1/r^4)$ we obtain 
$$\dfrac{(1+r)^{1+r}}{r^r}=(1+r)\exp\Big(1-\dfrac{1}{2r}+\dfrac{1}{3r^2}-O(1/r^3)\Big)$$
as $r\to \infty$.
Now the Taylor expansion 
$$\exp(1+x)=e\Big(1+x+\dfrac{x^2}{2}+O(x^3)\Big)=e\Big(1-\dfrac{1}{2r}+\dfrac{11}{24r^2}-O(1/r^3)\Big),$$
where $x=-\dfrac{1}{2r}+\dfrac{1}{3r^2}-O(1/r^3)$, gives
\begin{gather*}
(1+r)\exp\Big(1-\dfrac{1}{2r}+\dfrac{1}{3r^2}-O(1/r^3)\Big)=(1+r)e\Big(1-\dfrac{1}{2r}+\dfrac{11}{24r^2}-O(1/r^3)\Big)= \\ er+\dfrac{e}{2}-O(1/r).
\end{gather*}
Consequently, 
\begin{align}\label{approx}
\dfrac{(1+r)^{1+r}}{r^r}=er+\dfrac{e}{2}-O(1/r).
\end{align}
Plugging $r=2d-2$ in \eqref{approx} we deduce that
\begin{align}\label{d-approx}
\dfrac{(2d-1)^{(2d-1)}}{(2d-2)^{(2d-2)}}=2de-3e/2-O(1/d),
\end{align}
which combined with \eqref{bZ ineq} establishes the desired upper bound.

For the lower bound, we have $b(\Z^d)\geq b_{r_d}(\Z^d)$ and $b_{r_d}(\Z^d)=f(r_d)$, where $r_d:=r(p_c(\Z^d))$. It has been proved in \cite{HaraSlade,HofstadSlade} that 
\begin{align}\label{critical}
p_c(\Z^d)=\dfrac{1}{2d}+\dfrac{1}{(2d)^2}+\dfrac{7}{2(2d)^3}+O(1/d^4),
\end{align} hence
$$r_d=\frac{1-p_c(\Z^d)}{p_c(\Z^d)}=\dfrac{16d^4}{8d^3+4d^2+7d+O(1)}-1.$$
We can easily compute that
\begin{gather*}
\dfrac{16d^4}{8d^3+4d^2+7d+O(1)}= 2d-\dfrac{8d^3+14d^2+O(d)}{8d^3+4d^2+7d+O(1)}=\\ 2d-\dfrac{8d^3+4d^2}{8d^3+4d^2+7d+O(1)}-O(1/d)
\end{gather*}
and
$$\dfrac{8d^3+4d^2}{8d^3+4d^2+7d+O(1)}=\dfrac{1}{1+O(1/d^2)}=1-O(1/d^2).$$
Hence $r_d=2d-2-O(1/d)$, which implies that $$b_{r_d}(\Z^d)=\frac{(1+r_d)^{1+r_d}}{r_d^{r_d}}=2de-3e/2-O(1/d).$$ 
Therefore, $b(\Z^d)=2de-\dfrac{3e}{2}-O(1/d)$ as desired.
\end{proof}

We remark that the asymptotic expansions of $\dfrac{(2d-1)^{(2d-1)}}{(2d-2)^{(2d-2)}}$ and $b_{r_d}$ differ in their third terms, and so we are unable to compute the third term in the asymptotic expansion of $b(\Z^d)$. It follows from the proof of \Tr{asymptotics} above that $b(\Z^d)-b_{r_d}(\Z^d)=O(1/d)$, i.e. $b_{r_d}$ is a good approximation of $b(\Z^d)$.

\medskip
Next, we use \Tr{asymptotics} and Kesten's argument \cite{Grimmett} to obtain the first two terms in the asymptotic expansion of $a(\Z^d)$. These had already been obtained by Miranda and Slade \cite{LatticeTreesTerms} but our proof is shorter. 

\begin{theorem}\label{animals}
$a(\Z^d)=2de-\dfrac{3e}{2}-O(1/d)$.
\end{theorem}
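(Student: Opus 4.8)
The plan is to prove the two matching inequalities separately. The lower bound $a(\Z^d)\ge 2de-\tfrac{3e}{2}-O(1/d)$ is immediate: since (site-)interfaces are a sub-species of lattice animals we have $a(\Z^d)\ge b(\Z^d)$ by the first inequality in \eqref{bounds}, and $b(\Z^d)=2de-\tfrac{3e}{2}-O(1/d)$ by \Tr{asymptotics}. All the work is in the matching upper bound $a(\Z^d)\le f(2d-2)=2de-\tfrac{3e}{2}-O(1/d)$, for which I would invoke Kesten's argument \cite{Grimmett} to bound animal counts through bond percolation.

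For the upper bound, for $n,m\in\N$ let $a_{n,m}$ denote the number of lattice animals $S$ with $|E(S)|=n$ and $|\partial_E S|=m$, so that $a_n(\Z^d)=\sum_m a_{n,m}$. Fix $p\in(0,1)$ and consider bond percolation on $\Z^d$. For a fixed such $S$, the event that every edge of $S$ is open while every edge of $\partial_E S$ is closed has probability $p^n(1-p)^m$ and forces the open cluster $C_o$ of $o$ to equal $S$; hence these events are pairwise disjoint over all animals, and summing their probabilities gives $\sum_{n,m}a_{n,m}\,p^n(1-p)^m\le 1$. In particular $a_{n,m}\le p^{-n}(1-p)^{-m}$ for every $p$, and choosing $p=\tfrac{n}{n+m}$ minimises the right-hand side, yielding $a_{n,m}\le f(m/n)^n$ with $f(r)=\tfrac{(1+r)^{1+r}}{r^r}$ as before.

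It remains to combine this with the same geometric bound used in \Tr{asymptotics}: a degree-sum count over $V(S)$ in $\Z^d$, together with $|V(S)|\le n+1$, gives $|\partial_E S|\le (2d-2)n+2d$ for every animal $S$ with $n$ edges. Thus $a_{n,m}=0$ unless $m\le (2d-2)n+2d$, so $a_n=\sum_m a_{n,m}$ has at most $(2d-2)n+2d+1$ nonzero terms, and since $f$ is increasing and $m/n\le (2d-2)+2d/n$, each term satisfies $a_{n,m}\le f\big((2d-2)+2d/n\big)^n$. Therefore
$$a_n(\Z^d)\le \big((2d-2)n+2d+1\big)\, f\!\big((2d-2)+\tfrac{2d}{n}\big)^n.$$
Taking $n$-th roots and letting $n\to\infty$ with $d$ fixed, the polynomial prefactor tends to $1$ and $f$ is continuous, so $a(\Z^d)\le f(2d-2)=\tfrac{(2d-1)^{2d-1}}{(2d-2)^{2d-2}}$, which equals $2de-\tfrac{3e}{2}-O(1/d)$ by \eqref{d-approx}. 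This matches the lower bound and finishes the proof.

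The main obstacle is the probabilistic step: one must check carefully that the ``$S$ open, $\partial_E S$ closed'' events are genuinely disjoint and that they account correctly for the non-induced edges of $S$. The point is that an edge joining two vertices of $S$ but absent from $E(S)$ lies in $\partial_E S$ and is therefore required to be closed, which is exactly what guarantees $C_o=S$ rather than a larger cluster. A secondary point is the order of limits: the prefactor $(2d-2)n+2d+1$ is harmless only after sending $n\to\infty$ for each fixed $d$, before examining the asymptotics in $d$.
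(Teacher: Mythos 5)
Your proof is correct and takes essentially the same route as the paper: Kesten's percolation argument combined with the geometric bound $|\partial_E S|\leq (2d-2)n+2d$, plus the lower bound $a(\Z^d)\geq b(\Z^d)$ from \Tr{asymptotics}. The only cosmetic difference is that you stratify by the boundary size $m$ and optimise $p=n/(n+m)$ per stratum, whereas the paper fixes $p=\tfrac{1}{2d-1}$ outright --- which is exactly your optimiser $p=\tfrac{1}{1+r}$ at the extremal ratio $r=2d-2$ --- so both arguments yield the same bound $f(2d-2)=\tfrac{(2d-1)^{2d-1}}{(2d-2)^{2d-2}}$.
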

\begin{proof}
Let $C$ be a connected subgraph containing $o$. Arguing as in the proof of \Tr{asymptotics}, we obtain that $|\partial_E C|\leq (2d-2)|E(C)|+2d$. It follows that for every $p\in (0,1)$.
$$a_n(\Z^d)p^n (1-p)^{(2d-2)n+2d} \leq \Pr_p(|E(C_o)|=n)\leq 1.$$
Choosing $p=\frac{1}{2d-1}$ and dividing by $p^n (1-p)^{(2d-2)n+2d}$, we deduce from \eqref{d-approx} that $$a(\Z^d)\leq \frac{(2d-1)^{(2d-1)}}{(2d-2)^{(2d-2)}}=2de-3e/2-O(1/d).$$
Since $a(\Z^d)\geq b(\Z^d)$, the lower bound follows from \Tr{asymptotics}.
\end{proof}

The behaviour of $a(\Z^d)$, and the analogue $t(\Z^d)$ for lattice-trees, has been extensively studied in the physics literature. The expansions 
\begin{align*}
a(\Z^d)=\sigma e\exp &\left(- \frac 12 \frac{1}{\sigma}- \big(\frac{8}{3}-\frac{1}{2e}\big) \frac{1}{\sigma^2}- \big(\frac{85}{12}-\frac{1}{4e}\big) \frac{1}{\sigma^3}- \big(\frac{931}{20}-\frac{139}{48e} - \frac{1}{8e^2}\big)\frac{1}{\sigma^4}\right.\nonumber \\ & \hspace{5mm}\left. - \big(\frac{2777}{10}+\frac{177}{32e} - \frac{29}{12e^2}\big)\frac{1}{\sigma^5} + \cdots \right)
\end{align*}
and
\begin{align}\label{trees}
t(\Z^d)=\sigma e\exp\left(- \frac 12 \frac{1}{\sigma}- \frac 83 \frac{1}{\sigma^2}- \frac {85}{12} \frac{1}{\sigma^3}- \frac {931}{20} \frac{1}{\sigma^4}- \frac {2777}{10} \frac{1}{\sigma^5}+ \cdots\right),
\end{align}
where $\sigma=2d-1$, were reported in \cite{GP}, \cite{Harr82, PG95}, respectively, but without any rigorous bounds on the error terms. Miranda and Slade \cite{LatticeTrees} proved that both $a(\Z^d)$ and $t(\Z^d)$ are asymptotic to $2de$. The first three terms of $a(\Z^d)$ and $t(\Z^d)$ have been computed rigorously by the same authors in \cite{LatticeTreesTerms}.

Any lattice tree is an \pint , and therefore we have $t(\Z^d)\leq b(\Z^d)\leq a(\Z^d)$. Although the first two terms in the asymptotic expansions of each of them are the same, we expect that the strict inequality $t(\Z^d)<b(\Z^d)$ holds (independently of the choice of a basis of the cycle space used to define interfaces). The strict inequality $b(\Z^d)<a(\Z^d)$ is proved in the companion paper \cite{brI}, when the chosen basis of the cycle space contains only cycles of bounded length, using an argument similar to that in the proof of Kesten's pattern theorem for self-avoiding walks \cite{KestenI}. Proving the inequality $t(\Z^d)<b(\Z^d)$ seems more challenging because even a local modification on the structure of a lattice tree can have global effects on the structure of the corresponding interface. %\mymargin{Does this make sense? Adding one edge to a tree can create long cycles and it is possible that the new interface does not contain the bulk of the original tree}

%\begin{question}
%Does $t(\Z^d)<b(\Z^d)<a(\Z^d)$ hold for any $d>1$?
%\end{question}

We recall that using \eqref{critical} we can easily compute the first three terms of the $1/d$ expansion of $b_{r_d}(\Z^d)$, from which we obtain a lower bound on $b(\Z^d)$, but only the first two of them coincide with the corresponding terms of the upper bound. On the other hand, we can check that all first three terms of the $1/d$ expansion of $b_{r_d}(\Z^d)$ coincide with the corresponding terms of the $1/d$ expansion of $t(\Z^d)$. %\mymargin{This is confusing; didn't we just use \eqref{critical} to compute the first {\bf two} terms? and don't we only get lower bounds? Ans: See the comment after the proof of Theorem 3.1 and don't confuse $b_{r_d}(\Z^d)$ with $b(\Z^d)$}. 
However, we expect that the fourth term of the asymptotic expansion of $b_{r_d}(\Z^d)$ is strictly smaller than the fourth term of the asymptotic expansion of $t(\Z^d)$, as suggested by \eqref{trees} and the asymptotic expansion 
$$p_c(\Z^d)=\dfrac{1}{\sigma}+\dfrac{5}{2\sigma^3}+\dfrac{15}{2\sigma^4}+\dfrac{57}{\sigma^5}+\cdots$$ that is reported in \cite{GauntRuskin} without rigorous proof. This implies the strict inequalities $b_{r_d}(\Z^d)<t(\Z^d)$ and $b_{r_d}(\Z^d)<b(\Z^d)$ for every large enough value of $d$. We expect that these strict inequalities hold for every $d>1$. For example, we know that $b_{r_2}(\Z^2)=4$, because $p_c(\Z^2)=1/2$ \cite{KestenCritical}. On the other hand, for small enough numbers $n$, the value of $t_n(\Z^2)$ is known exactly, and a concatenation argument yields the lower bound $t(\Z^2)\geq 4.1507$ \cite{GSTW82,WS90}.

\subsection{Site variants}
We now prove analogous results for site-interfaces and site-animals. We start with a weaker analogue of \Tr{asymptotics}:

\begin{theorem}\label{a Zd}
The exponential growth rate of the number of site-\pint s of $\Z^d$ satisfies $\dot{b}(\Z^d)=2de-O(1)$.
\end{theorem}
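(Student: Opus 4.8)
The plan is to mirror the two-sided argument of \Tr{asymptotics}, with the combinatorial half producing the upper bound and percolation producing the lower bound. For the upper bound I would first prove the site analogue of the boundary inequality used there: for any site-\pint\ $P$ on $n=|P|$ vertices, $|\partial P|\le(2d-2)n+2$. Indeed, $\sum_{v\in V(P)}\deg_{\Z^d}(v)=2dn$, and this sum counts each edge spanned by $P$ twice and each edge leaving $P$ once; since $P$ is connected it spans at least $n-1$ edges, so the number of edges from $P$ to its complement is at most $2dn-2(n-1)=(2d-2)n+2$. As each vertex of $\partial_V P$ receives at least one such edge and $\partial P\subseteq\partial_V P$, the claimed bound follows. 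Consequently $\dot b_r=0$ for every $r>2d-2$, and combining the site versions of \eqref{b_r f_r ineq} and \eqref{equality} with the monotonicity of $f$ gives $\dot b(\Z^d)=\max_r\dot b_r(\Z^d)\le f(2d-2)=\frac{(2d-1)^{(2d-1)}}{(2d-2)^{(2d-2)}}$, which by \eqref{d-approx} equals $2de-3e/2-O(1/d)$, in particular $2de-O(1)$.

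For the lower bound I would apply the site version of \eqref{bounds}, namely $\dot b(\Z^d)\ge f\big(r(\dot p_c(\Z^d))\big)$, and feed into it a sufficiently sharp upper bound on the site threshold. It suffices to use $\dot p_c(\Z^d)\le\frac{1}{2d}+O(1/d^2)$, available from \cite{HeyMat} (see \eqref{site perco asym rig}); since $r(p)=\frac1p-1$, this yields $r(\dot p_c(\Z^d))=2d-O(1)$, whence by \eqref{approx} we get $f\big(r(\dot p_c(\Z^d))\big)=e\,r(\dot p_c(\Z^d))+\tfrac e2-O(1/d)=2de-O(1)$. Combined with the upper bound this gives $\dot b(\Z^d)=2de-O(1)$.

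The combinatorial half is essentially the edge argument of \Tr{asymptotics} with vertices replacing edges, so the real content lies in the lower half, and the main obstacle is the precision of the input on $\dot p_c$: a weaker estimate such as $\dot p_c=\frac{1}{2d}(1+o(1))$ would only give $r(\dot p_c)=2d(1-o(1))$ and hence the far weaker $\dot b\ge 2de-o(d)$, insufficient for a constant error term. This need for $1/d^2$-accuracy is also what forces the conclusion to be weaker than \Tr{asymptotics}: the second-order term produced by the lower bound is governed by the $1/(2d)^2$-coefficient of $\dot p_c$, which for site percolation is \emph{not} the value $1$ that makes the edge-case second terms coincide, so it fails to match the $-3e/2$ of the upper bound. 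Hence only the leading term $2de$ can be pinned down, and we are left with an $O(1)$ error rather than the $O(1/d)$ of \Tr{asymptotics}.
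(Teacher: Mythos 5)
Your proposal is correct and follows essentially the same route as the paper: the same degree-sum argument giving $|\partial P|\le(2d-2)|P|+2$ (the paper phrases it via $2k+l\le 2d|P|$ with $k\ge|P|-1$, $l\ge|\partial P|$) for the upper bound $\dot b(\Z^d)\le f(2d-2)$, and the same percolation lower bound $\dot b(\Z^d)\ge f(r(\pcs(\Z^d)))$ with a threshold estimate of accuracy $\pcs(\Z^d)=\frac{1}{2d}\big(1+O(1/d)\big)$. The only cosmetic difference is that you invoke \cite{HeyMat} via \eqref{site perco asym rig} where the paper's proof cites Hara and Slade \cite{HaraSlade}; both suffice, and your closing diagnosis of why only an $O(1)$ error term is attainable (the second-order coefficient $5/2$ of $\pcs$ yields $-3e$ against the upper bound's $-3e/2$) matches the paper's subsequent discussion.
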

\begin{proof}
Similarly to the proof of \Tr{asymptotics}, we will show that for any site-\pint\ $P$ of $\Z^d$ we have $|\partial P|\leq (2d-2)|P|+2$.
Let $k$ be the number of edges of the graph spanned by $P$, and let $l$ be the number of edges with one endvertex in $P$ and one in $\partial P$. Notice that $k\geq |P|-1$ and $l\geq |\partial P|$. Arguing as in the proof of \Tr{asymptotics} we obtain 
$$2(|P|-1)+|\partial P|\leq 2k+l\leq 2d|P|.$$ By rearranging we obtain the desired inequality. Arguing as in the proof of \Tr{asymptotics} we obtain
$$\dot{b}(\Z^d)\leq \frac{(2d-1)^{(2d-1)}}{(2d-2)^{(2d-2)}}=2de-O(1).$$
Moreover, we have that $\dot{b}(\Z^d)\geq \dot{b}_{\dot{r}_d}(\Z^d)$ and $\dot{b}_{\dot{r}_d}(\Z^d)=f(\dot{r}_d)$, where $\dot{r}_d:=r(\pcs(\Z^d))$. Hara and Slade \cite{HaraSlade} proved that $\pcs(\Z^d)=\big(1+O(1/d)\big)/2d$, hence $$\dot{r}_d=\frac{1-\pcs(\Z^d)}{\pcs(\Z^d)}=\dfrac{2d}{1+O(1/d)}-1.$$ Using \eqref{approx} we obtain 
$$\dot{b}_{\dot{r}_d}(\Z^d)=\frac{(1+\dot{r}_d)^{1+\dot{r}_d}}{\dot{r}_d^{\dot{r}_d}}=\dfrac{2de}{1+O(1/d)}-e/2-O(1/d).$$
Since $\dfrac{1}{1+O(1/d)}=1-O(1/d)$, we have $$\dfrac{2de}{1+O(1/d)}-e/2-O(1/d)=2de\Big(1-O(1/d)\Big)-e/2-O(1/d)=2de-O(1).$$
Therefore, $\dot{b}_{\dot{r}_d}(\Z^d)=2de-O(1)$, which implies that $\dot{b}(\Z^d)=2de-O(1)$ as desired.
\end{proof}

In the previous section we used \eqref{bounds} and \eqref{critical} to lower-bound $b(\Z^d)$. 
Recently, Heydenreich and Matzke \cite{HeyMat} proved that\footnote{We remark that the more detailed expansion 
\begin{align}\label{site perco asym}
\pcs(\Z^d)=\dfrac{1}{\sigma}+\dfrac{3}{2\sigma^2}+\dfrac{15}{4\sigma^3}+\dfrac{83}{4\sigma^4}+\cdots
\end{align}
was reported in \cite{GSR76} without any rigorous bounds on the error terms. } 
\begin{align}\label{site perco asym rig}
\pcs(\Z^d)=\dfrac{1}{2d}+\dfrac{5}{2(2d)^2}+\dfrac{31}{4(2d)^3}+O(1/d^4).
\end{align}

Combining \eqref{site perco asym rig} with our above method gives the lower bound $\dot{a}(\Z^d)\geq \dot{b}(\Z^d)\geq 2de-3e+O(1/d)$. Arguing as in \Tr{animals}, we can easily obtain 
\begin{theorem}\label{gap}
$\dot{a}(\Z^d)\leq 2de-O(1)$ and $\dot{a}(\Z^d)\geq 2de-3e+O(1/d)$. 
\end{theorem}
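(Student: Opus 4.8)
The plan is to establish the two inequalities separately, each by replaying an argument already carried out in the preceding pages. The lower bound comes from the percolation-to-animals dictionary \eqref{bounds} fed by the Heydenreich--Matzke expansion \eqref{site perco asym rig}, while the upper bound is the site analogue of the Kesten-style first-moment argument in the proof of \Tr{animals}.

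For the lower bound I would invoke the site version of \eqref{bounds}, namely $\dot{a}(\Z^d) \geq \dot{b}(\Z^d) \geq \dot{b}_{\dot{r}_d}(\Z^d) = f(\dot{r}_d)$ with $\dot{r}_d := r(\pcs(\Z^d))$, reducing everything to an asymptotic computation. Writing $m = 2d$ and substituting \eqref{site perco asym rig} into $\dot{r}_d = 1/\pcs - 1$, inversion of $\pcs = \tfrac1m\big(1 + \tfrac{5}{2m} + O(1/m^2)\big)$ gives $1/\pcs = m - \tfrac52 + O(1/m)$, hence $\dot{r}_d = 2d - \tfrac72 + O(1/d)$. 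Feeding this into \eqref{approx}, $f(r) = er + \tfrac e2 - O(1/r)$, yields $f(\dot{r}_d) = e\big(2d - \tfrac72\big) + \tfrac e2 - O(1/d) = 2de - 3e + O(1/d)$, the asserted lower bound.

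For the upper bound I would first record the site boundary estimate $|\partial_V S| \leq (2d-2)n + 2$ for every site-animal $S$ on $n$ vertices, obtained by the degree count in the proof of \Tr{a Zd}: summing vertex degrees over $S$ gives $2dn = 2k + \ell$ with $k \geq n-1$ internal edges and $\ell \geq |\partial_V S|$ boundary edges, so $|\partial_V S| \leq \ell = 2dn - 2k \leq (2d-2)n + 2$. Since in site percolation $\Pr_p(C_o = S) = p^{n}(1-p)^{|\partial_V S|} \geq p^n (1-p)^{(2d-2)n+2}$, summing over the $\dot{a}_n(\Z^d)$ site-animals of size $n$ and using $\Pr_p(|C_o| = n) \leq 1$ gives $\dot{a}_n(\Z^d)\, p^n (1-p)^{(2d-2)n+2} \leq 1$. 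Taking $n$-th roots and letting $n \to \infty$ yields $\dot{a}(\Z^d) \leq \big(p(1-p)^{2d-2}\big)^{-1}$; optimising at $p = 1/(2d-1)$ and invoking \eqref{d-approx} gives $\dot{a}(\Z^d) \leq \tfrac{(2d-1)^{2d-1}}{(2d-2)^{2d-2}} = 2de - 3e/2 - O(1/d) = 2de - O(1)$.

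I do not expect a genuine obstacle, which is why the bound is obtained ``easily'': both halves are mechanical once the cited inputs are granted. The one step needing care is the $1/d$-inversion in the lower bound, where the constant $-3e$ emerges because the second-order coefficient $\tfrac52$ of \eqref{site perco asym rig} forces the constant $-\tfrac72$ in $\dot{r}_d$. (The third-order coefficient $\tfrac{31}{4}$ influences only the $O(1/d)$ term of $\dot{r}_d$, so for this statement $\pcs$ to second order already suffices.)
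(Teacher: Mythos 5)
Your proposal is correct and follows the paper's own route exactly: the lower bound via the site version of \eqref{bounds} with $\dot{r}_d=r(\pcs(\Z^d))$ computed from the Heydenreich--Matzke expansion \eqref{site perco asym rig}, and the upper bound via the Kesten-style first-moment argument of \Tr{animals}, using the degree-count boundary estimate $|\partial_V S|\leq (2d-2)n+2$ and the choice $p=1/(2d-1)$ together with \eqref{d-approx}. You have merely written out in detail what the paper compresses into ``Combining \eqref{site perco asym rig} with our above method \ldots\ Arguing as in \Tr{animals}'', and your asymptotic inversion giving $\dot{r}_d=2d-\tfrac{7}{2}+O(1/d)$, hence the constant $-3e$, is accurate.
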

Barequet, Barequet and Rote  \cite{SiteAnimals} proved the weaker result $\dot{a}(\Z^d)=2de-o(d)$, and they conjectured that $\dot{a}(\Z^d)=2de-3e+O(1/d)$ in agreement with physicists' predictions \cite[(2.22)]{PG95}, so it only remains to prove a matching upper bound\footnote{In fact \cite{SiteAnimals} offers the more detailed conjecture $\dot{a}(\Z^d)=2de-3e-\frac{31e}{48d}+O(1/d^2)$.}. We will improve the upper bound in \Tr{improved bounds} below.
We remark that under the assumption $\dot{a}(\Z^d)=2de-3e+O(1/d)$ holds, we obtain $\dot{b}(\Z^d)-\dot{b}_{\dot{r}_d}(\Z^d)=O(1/d)$.

\section{Upper bounds for lattice site-animals} \label{sec upper}

In the previous section we used Kesten's argument in order to upper bound $\dot{a}(\Z^d)$. Another method that gives the same upper bounds for $\dot{a}(\Z^d)$ was introduced by Eden \cite{Eden61}. Eden described a procedure that associates in a canonical way, a spanning tree and a binary sequence to every lattice site-animal. This reduces the problem of counting lattice site-animals to a problem of counting binary sequences with certain properties. Klarner and Rivest \cite{KlaRive} enhanced Eden's method in the case of $\Z^2$, proving that $\dot{a}(\Z^2)\leq 4.6496$. Recently, Barequet and Shalah \cite{BareShal} extended this enhancement to higher dimensions, obtaining $\dot{a}(\Z^d)\leq 2de-2e+1/(2d-2)$.

In this section we will utilise Eden's procedure to reduce the gap between the aforementioned inequality and the conjectured asymptotic expansion $\dot{a}(\Z^d)=2de-3e+O(1/d)$ mentioned in the previous section:

\begin{theorem}\label{improved bounds}
We have $\dot{a}(\Z^d)\leq 2de-5e/2+O(1/\log(d))$. 
\end{theorem}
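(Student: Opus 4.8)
The plan is to run Eden's growth procedure \cite{Eden61} to build a canonical injection from site-animals into binary words, and then to count the admissible words more tightly than \cite{BareShal}. I would fix a total order on the $2d$ axis directions together with a priority rule on the cells of $\Z^d$, start from $o$, and process the occupied cells in the order in which they are discovered. When a cell $c$ is processed I examine its neighbours in the fixed direction order, but only those whose status has not yet been determined, recording one bit each: a $1$ if the neighbour lies in the animal (a new occupied cell, equivalently a tree edge of the induced spanning tree) and a $0$ if it does not (a boundary cell revealed for the first time). Since the priority rule is deterministic the word reconstructs the animal, so $\dot a_n(\Z^d)$ is at most the number of admissible words. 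Each such word has exactly $n-1$ ones, so the whole problem reduces to controlling the zeros, i.e.\ the first-time reveals of boundary cells.

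Writing $T$ for the length of a word, the animals encoded by words of length $T$ number at most $\binom{T}{n-1}$, and the crude bound $T\le(2d-1)n$ (each cell examines all neighbours but its parent) merely reproduces $\frac{(2d-1)^{(2d-1)}}{(2d-2)^{(2d-2)}}=2de-\tfrac{3e}{2}$ through \eqref{approx}, matching the percolation bound. The real point is that words of near-maximal length are overwhelmingly infeasible geometrically, so the count of admissible words lies far below the binomial. Following \cite{KlaRive,BareShal} I would organise this through the amortised number of neighbours of a processed cell that are already determined when it is reached: the parent contributes $1$, and \cite{BareShal} extract a further $\tfrac12$ on average from boundary cells shared between two occupied cells, which is exactly what upgrades $2de-\tfrac{3e}{2}$ to $2de-2e$. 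My target is to raise this amortised count from $\tfrac32$ to $2$, replacing the effective length $(2d-\tfrac32)n$ by $(2d-2)n$ and so, via \eqref{approx} with $r=2d-3$, producing $\frac{(2d-2)^{(2d-2)}}{(2d-3)^{(2d-3)}}=2de-\tfrac{5e}{2}$.

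The extra determined neighbour must come from a finer analysis of the exploration around each new cell. When $c$ is reached from its parent $p$ along a direction $\delta$, I would look at the boundary cells incident to $c$ and count how many were already revealed while processing cells that precede $c$; a boundary cell $w$ incident to $c$ is already determined exactly when some occupied neighbour of $w$ precedes $c$. Summed over the animal, these savings equal the number of (occupied cell, incident boundary cell) incidences minus the number of distinct boundary cells, and the task is to bound this quantity from below by about $\tfrac12 n$ more than \cite{BareShal} do. I would classify cells by the arrangement of occupied cells within distance two and use the $\Z^d$ geometry, together with the deterministic priority order, to show that a positive proportion of local arrangements force an additional shared boundary cell (or an additional already-occupied neighbour), charging the remaining cells against these. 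This has to be set up as a weighted count over admissible words rather than as a uniform bound on $T$, since individual animals such as straight paths genuinely attain $T\approx(2d-1)n$ while being far too few to matter.

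The word count is then turned into the claimed asymptotics through the expansion $\frac{(1+r)^{1+r}}{r^r}=er+\tfrac{e}{2}-O(1/r)$ of \eqref{approx}. I expect the main obstacle to be precisely this amortised bookkeeping: one must account for shared and already-determined neighbours without double counting, uniformly over every animal and every admissible processing history, and extract the sharp constant $\tfrac52$ rather than a weaker one. The error term $O(1/\log d)$ indicates that the local classification cannot be carried out to all orders uniformly in $d$: I would impose a cutoff $k$ on the radius (or size) of the local arrangements tracked, bound the untracked part crudely, and optimise $k$; the gain saturates while the overhead grows with $k$, so the balance sits at $k\asymp\log d$ and the amortised count of determined neighbours reaches only $2-O(1/\log d)$, which is what leaves the residual $O(1/\log d)$ in the final bound.
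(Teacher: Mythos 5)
Your starting point (Eden's exploration, words with $n-1$ ones, entropy bound $\binom{T}{n-1}$) coincides with the paper's, and your target arithmetic ($r=2d-3$ in \eqref{approx} giving $2de-5e/2$) is the right one, but the central mechanism you propose --- raising the amortised number of already-determined neighbours per cell from $3/2$ to $2-O(1/\log d)$ by classifying local arrangements --- cannot work, and this is a genuine gap, not deferred bookkeeping. In the scheme you describe, every tree edge contributes exactly one $1$, every distinct boundary vertex contributes exactly one $0$ (its first reveal), and non-tree adjacencies and repeat reveals are skipped; summing over the animal, the total number of skipped examinations beyond the parents is \emph{identically} $(2d-2)n-|\partial_V X|+2$ (the boundary contributes $I-|\partial_V X|$ with $I=2dn-2m$ incidences, each of the $m-(n-1)$ non-tree edges contributes $2$, and the $m$'s cancel), so the word length is exactly $T=(n-1)+|\partial_V X|$ for every animal and every priority rule. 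Two consequences: first, your entropy bound at volume-to-surface ratio $r$ gives exactly $\binom{(1+r)n}{n}^{1/n}\to f(r)$, i.e.\ you have re-derived \eqref{a leq f} and nothing more; second, "amortised savings reach $2$" is literally equivalent to "$|\partial_V X|\le(2d-3)n+2$", so no charging argument over distance-two patterns can establish it --- in the critical regime $|\partial_V X|\approx(2d-2)n$ the savings provably vanish for \emph{every} animal, not just straight paths, and the class you hope to dismiss as "far too few to matter" is precisely the class the theorem must control. Your plan is circular at exactly its load-bearing step.

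The paper's proof supplies the missing idea: in the near-maximal-ratio regime it is not the word \emph{length} that shrinks but the entropy of the ones' \emph{positions}. Lemma~\ref{improved bounds r} proves the turn inequality \eqref{turns}, $|\partial_V X|\le(2d-2)n-t+2$ where $t$ counts the turns of the Eden spanning tree, so boundary within $xn$ of the maximum forces $t\le(x+\epsilon)n+2$; and since at most one child of each vertex fails to create a turn, the positions of all non-turn ones are determined by the rest of the word, leaving only about $\binom{(2d-1)(n-1)}{j}$ choices with $j$ small (times lower-order factors), whence $\dot a_{2d-2-x}$ is tiny for small $x$ even though $T\approx(2d-1)n$ there. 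The bulk $r\le 2d-2-z$ is handled directly by \eqref{a leq f} and monotonicity of $f$, and the two regimes are merged through the ratio decomposition \eqref{equality} --- machinery entirely absent from your sketch, without which even a correct average statement would not bound $\dot a(\Z^d)$. Finally, your account of the error term is also off: the $O(1/\log d)$ does not arise from a radius cutoff on tracked patterns, but from optimising the threshold $z=1-C^2/\log(2d-2)$, the largest $z$ for which the lemma's bound, which degrades like $(2d-2)^{x}$ as $x\to1$, still stays below $f(2d-2-z)$.
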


Our result improves the bounds of Barequet and Shalah \cite{BareShal} for every large enough $d$. 

We remark that $\dot{b}_{r_d}(\Z^d)=2de-3e+O(1/d)$ by \eqref{site perco asym rig}.
It is reasonable to expect that both $\dot{b}(\Z^d)-\dot{b}_{r_d}(\Z^d)=O(1/d)$ and $\dot{a}(\Z^d) - \dot{b}(\Z^d)=O(1/d)$ hold, as it happens for the bond variants, which would imply the aforementioned conjecture $\dot{a}(\Z^d)=2de-3e+O(1/d)$.

In order to prove \Tr{improved bounds}, we will show that a typical lattice site-animal has volume-to-surface ratio that is bounded away from its maximal possible value, namely $2d-2$. 

We will need the following definition. We let $\dot{a}_{n,r,\epsilon}$ denote the number of lattice site-animals $X$ of $\Z^d$ containing $o$ with $|X|=n$ and $(r-\epsilon)n \leq |\partial_V X| \leq (r+\epsilon)n$, and we define 
$$\dot{a}_r=\dot{a}_r(\Z^d):= \lim_{\epsilon \to 0} \limsup_{n\to\infty} {\dot{a}_{n,r,\epsilon}(\Z^d)}^{1/n}.$$
Using Kesten's argument, one can show that %\mymargin{do we need this? Ans: we use it to bound $a_r$ for small $r$ and for large $r$ we use the next lemma. }
\labtequ{a leq f}{$\dot{a}_r \leq f(r)$.}
for every $r>0$. This follows from the work of Hammond \cite{HammondExpRates}, and it can also be seen as a special case of \eqref{b_r f_r ineq}, since by choosing the full cycle space of $\Z^d$ as its basis, each lattice site-animal $P$ is a site-interface with $\partial P=\partial_V P$.

For the proof of \Tr{improved bounds} we will need the next lemma which bounds $\dot{a}_r(\Z^d)$ for $r$ close to $2d-2$. We remark that $\dot{a}_{2d-2}(\Z^d)\geq \dot{b}_{2d-2}(\Z^d)\geq 1$, as a straight path has volume-to-surface ratio roughly $2d-2$. We also make the convention $0^0=1$.

\begin{lemma}\label{improved bounds r}
Consider some $0\leq x \leq 1$, and let $y=\min\{x,1/2\}$. Then 
$$\dot{a}_{2d-2-x}(\Z^d) \leq \dfrac{(2d-1)^{2d-1}}{y^y (1-y)^{1-y} x^x (2d-1-x)^{2d-1-x}}.$$ 
In particular, $\dot{a}_{2d-2}(\Z^d)=1$.
\end{lemma}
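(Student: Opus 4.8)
The plan is to refine the Kesten-type counting behind \eqref{a leq f} by exploiting the rigidity of site-animals whose volume-to-surface ratio is close to the maximal value $2d-2$: such animals must be geometrically almost straight, so they are few.

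First I would set up an exact deficit identity. Writing $n=|X|$, let $k$ be the number of edges spanned by $X$ and $l$ the number of edges joining $X$ to $\partial_V X$. As in the proofs of \Tr{asymptotics} and \Tr{a Zd}, summing degrees gives $2k+l=2dn$, while connectedness gives $k=(n-1)+c$ with $c\geq 0$ the number of independent cycles, and $l=|\partial_V X|+m$ with $m\geq 0$ counting the surplus incidences of boundary vertices adjacent to more than one vertex of $X$. Eliminating $k,l$ yields
$$|\partial_V X| = (2d-2)n+2-(2c+m),$$
so an animal counted by $\dot a_{2d-2-x}$ satisfies, up to the tolerance $\epsilon n$, the identity $2c+m=xn$. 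Thus the defect $D:=2c+m$ is the single quantity controlling the ratio, and the point is that $D$ small forces $X$ to deviate rarely from straight-line growth.

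Second, I would encode $X$ by a canonical (Eden-type) breadth-first exploration from $o$, reading the $2d$ lattice directions at each explored vertex in a fixed order. The default behaviour of a vertex is to continue straight: a single collinear child, with all remaining neighbours pairwise distinct, previously unseen boundary vertices. I would prove the geometric key fact that a vertex can deviate from this default (by branching, turning, closing a cycle, or meeting an already-seen boundary vertex) only at the cost of at least one unit of $D$; hence at most $D=xn+o(n)$ of the directed exploration slots behave non-defaultly, and since the animal between consecutive defects is determined, $X$ is reconstructible from the list of defect slots together with the local data at each defect. For the counting there are $(2d-1)n+O(1)$ non-parent exploration slots, and choosing which $\leq xn$ of them carry a defect (bundling in the direction involved) contributes $\binom{(2d-1)n}{xn}$, using that $\binom{(2d-1)n}{j}$ is increasing for $j\leq xn\leq (2d-1)n/2$. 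The extra binary label needed to resolve each defect's type lives on at most $\min\{xn,n/2\}$ vertices — bounded by $xn$ because each such vertex consumes part of $D$, and $\binom{n}{j}$ peaks at $j=n/2$ — giving at most $\binom{n}{yn}$ with $y=\min\{x,1/2\}$. Summing over the polynomially many admissible splits of $D$ into $(c,m)$ and over $j$, taking $n$-th roots and letting $n\to\infty$ and $\epsilon\to 0$, Stirling's formula turns $\binom{(2d-1)n}{xn}\binom{n}{yn}$ into exactly
$$\frac{(2d-1)^{2d-1}}{y^y(1-y)^{1-y}\,x^x\,(2d-1-x)^{2d-1-x}}.$$
Setting $x=0$ (hence $y=0$) makes the bound equal to $1$; combined with the lower bound $\dot a_{2d-2}\geq 1$ from straight paths noted before the lemma, this yields $\dot a_{2d-2}=1$.

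The main obstacle I expect is the geometric rigidity step: making precise that each deviation from straight growth consumes at least one unit of $D=2c+m$, and bounding the number of defect slots by $D$ itself rather than by a constant multiple of it. Getting this constant exactly right is what produces the sharp exponents $2d-1-x$ and $x$ instead of weaker ones, and it is also where the different costs of cycle defects (cost $2$) and boundary-coincidence defects (cost $1$) must be tracked carefully to yield the $\min\{x,1/2\}$ threshold.
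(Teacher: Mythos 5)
Your overall architecture coincides with the paper's: a canonical Eden-type exploration, a bound of order $xn$ on the number of non-straight growth steps, the two binomial factors $\binom{(2d-1)n}{xn}$ and (effectively) $\binom{n}{yn}$, and Stirling --- this is essentially the paper's count \eqref{ijq} and its asymptotic evaluation. The genuine difference, and the genuine gap, lies in the key geometric step. Your deficit identity $|\partial_V X|=(2d-2)n+2-(2c+m)$ is correct, but the charging claim --- that each turn consumes at least one unit of $D=2c+m$ --- is the heart of the lemma, and you leave it unproven. It is true, but your global accounting must rule out double counting: several turns can share the same fourth vertex $w\notin X$, and then $w$ contributes only $\deg_X(w)-1$ to $m$, which a priori is one less than the number of turns charged to it; to restore the missing unit one must observe, say, that the grandparent in the square of the shallowest such turn is adjacent to $w$ and distinct from all the turning vertices. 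Similarly one must check that a non-tree edge cannot serve as the associated edge of two turns, or else absorb that case into the cost $2$ of a cycle, as you hint. The paper sidesteps all of this with a cleaner local formulation proved by induction along the exploration: when $u_{k+1}$ is revealed and creates a turn, the fourth vertex $w$ of the square through $u_{k+1}$, its parent and its grandparent already lies in $T_k\cup\partial T_k$, so the boundary grows by at most $2d-2$ rather than $2d-1$ at that step; summing gives \eqref{turns}, i.e.\ $|\partial_V X|\le (2d-2)n-t+2$, hence $t\le (x+\epsilon)n+2$, with no reference to $c$ or $m$ at all. Until your charging lemma is actually carried out (or replaced by this induction), the proof is incomplete at its crux.

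A secondary inaccuracy: ``the animal between consecutive defects is determined'' is false as stated. Even on defect-free stretches each vertex carries a free bit recording whether straight growth continues, since a straight run may simply stop at no cost to $D$; this is what the paper's third factor $\binom{n-1}{n-i-j}$ counts, namely which vertices possess a straight child. Your factor $\binom{n}{yn}$ happens to bound it correctly in both regimes --- for $x\ge 1/2$ via $\binom{n}{n/2}\approx 2^n$, and for $x<1/2$ because the number of vertices lacking a straight child is $1+i+j\le xn+O(d)$ --- but attributing it to ``defect types'' is conceptually wrong, and you should re-derive it as the straight-continuation count. Finally, note that the paper treats $x=1$ separately (via monotonicity of $f$ and \eqref{a leq f}), whereas your argument does not address this boundary case; your treatment of $x=0$, combining the upper bound $1$ with the straight-path lower bound, agrees with the paper's.
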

\begin{proof}
For $x=1$ we have $y=1/2$, and so the claimed upper bound is equal to 
$$2\dfrac{(2d-1)^{2d-1}}{(2d-2)^{2d-2}},$$
which is in turn equal to $2f(2d-2)$. Since $f(r)$ is an increasing function, 
$$f(2d-3)\leq f(2d-2)\leq 2f(2d-2).$$
The assertion now follows in the case $x=1$ from the fact that $\dot{a}_{2d-3}(\Z^d)\leq f(2d-3)$. So let us assume that $x<1$.

Let us start by introducing some necessary definitions. The \defi{lexicographical ordering} of $\Z^d$ is defined as follows. We say that a vertex $u=(u_1,u_2,\ldots,u_d)$ is smaller than a vertex $v=(v_1,v_2,\ldots,v_d)$ if there is some $i=1,2,\ldots,d$ such that $u_i\leq v_i$ and $u_j=v_j$ for every $j<i$. We also order the directed edges of the form $\ar{ou}$ in an arbitrary way. The latter ordering induces by translation a natural ordering of the set of directed edges with a common initial endvertex $v$, where $v$ is any vertex of $\Z^d$.

Consider some numbers $n\in \mathbb{N}$, and $\epsilon>0$ with $x+\epsilon<1$. We will start by describing Eden's procedure. Let $X$ be a lattice site-animal of size $n$ in $\Z^d$ containing $o$, such that $(2d-2-x-\epsilon)n\leq |\partial_V X|\leq (2d-2-x+\epsilon)n$. We will assign to $X$ a unique binary sequence $S=S(X)=(s_1,s_2,\ldots,s_{(2d-1)n-d+1})$ of length $(2d-1)n-d+1$. To this end, we will reveal the vertices of $X$ one by one in a specific way. Let $v_1$ be the lexicographically smallest vertex of $X$, and notice that $v_1$ has at most $d$ neighbours in $X$. For every $i=1,\ldots,d$, we let $s_i$ take the value $1$ if the $i$th directed edge of the form $\ar{u_1v}$ in the above ordering lies in the set of directed edges $\overleftrightarrow{E(X)}$ of $X$, and $0$ otherwise. The ordering of these directed edges induces an ordering on the neighbours of $u_1$ in $P$. We reveal the neighbours of $u_1$ in $X$ one by one according to the latter ordering, and we let $u_{j+1}$ be the $j$th revealed vertex. Now we proceed to the lexicographically smaller neighbour of $u_1$ lying in $X$, denoted $w$. The valid directed edges starting from $w$ are those not ending at $u_1$, and there are exactly $2d-1$ of them. The ordering of the whole set of directed edges starting from $w$ induces an ordering of the set of valid directed edges starting from $w$. For every $i=d+1,\ldots,3d-1$, we let $s_i$ take the value $1$ if the $(i-d)$th valid directed edge of the form $\ar{wv}$ lies in $\overleftrightarrow{E(X)}$ and $v$ has not been revealed so far (the latter is always true in this step but not necessarily in the following steps), and $0$ otherwise. We reveal the corresponding neighbours of $w$ in $X$ one by one, and we label them $u_k,u_{k+1}\ldots,$ where $k$ is the smallest index not previously used. Now we proceed as before up to the point that all vertices of $X$ have been revealed, and we set to $0$ all the remaining entries of $S$ that have not already been set to some value. Notice that $S$ contains exactly $n-1$ $1$'s, since $P$ has size $n$.

The above construction defines naturally a spanning subtree $T$ of $X$ rooted at $u_1$, by attaching an edge $u_k u_l$, $k<l$ to $T$ when $u_l$ is one of the neighbours of $u_k$ revealed when considering the valid directed edges starting from $u_k$. Given an edge $uv$ of $T$ with $u$ being the ancestor of $v$, we say that $uv$ is a \defi{turn} of $T$ if $uv$ is perpendicular to the edge $zu$ of $T$, where $z$ is the (unique) ancestor of $u$. We denote by $t$ the number of turns of $T$. We claim that 
\begin{align}\label{turns}
|\partial_V X|\leq (2d-2)n-t+2.
\end{align} 
Indeed, for every $k=1,2,\ldots,n$, let $T_k$ be the subtree of $T$ with $V(T_k)=\{u_1,u_2,\ldots,u_k\}$. Let also $\partial T_k$ be the set of vertices in $\Z^d\setminus \{u_1,u_2,\ldots,u_k\}$ having a neighbour in $\{u_1,u_2,\ldots,u_k\}$. Write 
$t_k$ for the number of turns of $T_k$. We will prove inductively that $$|\partial T_k|\leq (2d-2)|T_k|-t_k+2$$
for every $k=1,2,\ldots,n$.
The claim will then follow once we observe that $|\partial_V X|= |\partial T_n|$, $|X|=|T_n|=n$ and $t=t_n$.
For $k=1$, the assertion clearly holds. Assume that it holds for some $1\leq k<n$. Notice that we always have $|T_{k+1}|=|T_k|+1$ and $|\partial T_{k+1}|\leq |\partial T_k|+2d-2$, because $u_{k+1}$ lies in $\partial T_k$ and at most $2d-1$ neighbours of $u_{k+1}$ lie in $\partial T_{k+1}$. If $t_{k+1}=t_k$, then we get $|\partial T_{k+1}|\leq (2d-2)|T_{k+1}|-t_{k+1}+2$, as claimed. Suppose that $t_{k+1}=t_k +1$. Consider the ancestor $u_l$ of $u_{k+1}$, and the ancestor $u_m$ of $u_l$. Since by adding $u_{k+1}$ to $T_k$ we create one more turn, $u_{k+1}$, $u_l$ and $u_m$ are three vertices of a common square. Let $w$ be the fourth vertex. Notice that $w$ lies in $T_k \cup \partial T_k$. Thus, at most $2d-2$ neighbours of $u_{k+1}$ lie in $\partial T_{k+1} \setminus \partial T_k$. Therefore, $|\partial T_{k+1}|\leq (2d-2)|T_{k+1}|-t_{k+1}+2$, as desired. This completes the proof of \eqref{turns}.

We will now utilise \eqref{turns} to prove the statement of the lemma. Our assumption $(2d-2-x-\epsilon)n\leq |\partial_V X|$ combined with \eqref{turns} implies that $t\leq (x+\epsilon)n+2$. Hence it suffices to find an upper bound for the number of lattice site-animals $Q$ of size $n$ with $t\leq q:=(x+\epsilon)n+2$. We claim that the number $\dot{a}_n$ of such lattice site-animals of size $n$ satisfies 
\begin{align}\label{ijq}
\dot{a}_n\leq\sum_{i=1}^d \sum_{j=0}^{\min\{q,n-i\}} {d \choose i}{(2d-1)(n-1) \choose j} {n-1 \choose n-i-j}.
\end{align}
Indeed, let $i$ be number of neighbours of $u_1$ in $Q$, let $j$ be the number of $1$'s contributing to the number of turns in those bits of $S(Q)$. Let us apply the following steps in turn:
\begin{enumerate}
\item Set $i$ entries of $(s_1,\ldots,s_d)$ equal to $1$,
\item Choose which entries of $S(Q)$ contribute to the number of turns,
\item Choose which bits, except for the first one, contain an additional $1$.
\end{enumerate}
After the first two steps, we have specified which entries of $S(Q)$ are set to $1$, except for those that do not contribute to the number of turns. Since for every vertex of $Q$, at most one of its children does not contribute to the number of turns, we conclude that at most one entry of each of the bits chosen in the fourth step can be set to $1$, the position of which in $S(Q)$ is uniquely determined by the values of the remaining entries of $S(Q)$. It is easy to see now that for every $i$ and $j$, there are at most 
$${d \choose i}{(2d-1)(n-1) \choose j} {n-1 \choose n-i-j}$$
possibilities for $Q$, and so \eqref{ijq} can be obtained by summing over all possible values of $i$ and $j$.

We will now handle the sum in the right-hand side of \eqref{ijq}. Since the binomial coefficient ${m \choose l}$ is an increasing function of $l$ when $l\leq m/2$, we have $${(2d-1)(n-1) \choose j}\leq {(2d-1)(n-1) \choose q}.$$ Using Stirling's approximation $m!=\big(1+o(1)\big) \sqrt{2\pi m}(m/e)^m$ we obtain 
$${(2d-1)(n-1) \choose q}\approx \dfrac{(2d-1)^{(2d-1)n}}{(x+\epsilon)^{x+\epsilon}(2d-1-x-\epsilon)^{(2d-1-x-\epsilon)n}},$$
where $\approx$ denotes equality up to a multiplicative constant that is $O(c^n)$ for every $c>1$. 
Clearly $${n-1 \choose n-i-j}\leq 2^n.$$ 
It follows that
$$\dot{a}_{n,2d-2-x,\epsilon} \lesssim 2^n \dfrac{(2d-1)^{(2d-1)n}}{(x+\epsilon)^{x+\epsilon}(2d-1-x-\epsilon)^{(2d-1-x-\epsilon)n}},$$
where $\lesssim$ denotes inequality up to a multiplicative constant that is $O(c^n)$ for every $c>1$.
Taking $n$th roots and letting $n\to\infty$ and $\epsilon\to 0$ we obtain $$\dot{a}_{2d-2-x}\leq 2 \dfrac{(2d-1)^{2d-1}}{x^x(2d-1-x)^{2d-1-x}}.$$

The above bound can be improved when $x<1/2$. Suppose that $x<1/2$. We can choose $\epsilon>0$ small enough, and increase the value of $n$, if necessary, to ensure that $q+d< n/2$. Since the binomial coefficient ${m \choose l}$ is a decreasing function of $l$ when $l\geq m/2$, for every $i$ and $j$, we have
$${n-1 \choose n-i-j}\leq {n-1 \choose n-d-q},$$
because $n-i-j\geq n-d-q \geq n/2$.
Using again Stirling's approximation, we deduce that
$${n-1 \choose n-d-q}\approx\big((x+\epsilon)^{x+\epsilon}(1-x-\epsilon)^{1-x-\epsilon}\big)^{-n}.$$
We can now conclude that
$$\dot{a}_{n,2d-2-x,\epsilon}\lesssim \dfrac{(2d-1)^{(2d-1)n}}{(x+\epsilon)^{(2x+2\epsilon)n} (1-x-\epsilon)^{(1-x-\epsilon)n}(2d-1-x)^{(2d-1-x)n}}.$$ Taking $n$th roots and letting $n\to\infty$ and $\epsilon\to 0$ we obtain $$\dot{a}_{2d-2-x}\leq \dfrac{(2d-1)^{2d-1}}{x^{2x} (1-x)^{1-x}(2d-1-x)^{2d-1-x}}.$$
\end{proof}

Since a site-\pint\ is also a lattice site-animal and $\partial P\subset \partial_V P$, we obtain

\begin{corollary} \label{improved bounds cor}
Consider some $0\leq x \leq 1$, and let $y=\min\{x,1/2\}$. Then 
$$\dot{b}_{2d-2-x}(\Z^d) \leq \dfrac{(2d-1)^{2d-1}}{y^y (1-y)^{1-y} x^x (2d-1-x)^{2d-1-x}}.$$ 
In particular, $\dot{b}_{2d-2}(\Z^d)=1$.
\end{corollary}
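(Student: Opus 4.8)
The plan is to read off the Corollary from \Lr{improved bounds r} using that a site-\pint\ $P$ is a lattice site-animal whose \emph{full} vertex boundary $\partial_V P$ contains the interface boundary $\partial P$, so $|\partial_V P|\geq |\partial P|$. The point to keep in mind is that this inequality need not be an equality: a site-\pint\ $P$ with $|\partial P|\approx (2d-2-x)|P|$ is, viewed merely as a site-animal, only guaranteed to satisfy $|\partial_V P|\geq (2d-2-x)|P|$, and its volume-to-surface ratio as a site-animal may be anything in $[2d-2-x,\,2d-2]$. Hence I cannot simply assert $\dot b_{2d-2-x}\leq \dot a_{2d-2-x}$; instead I must bound $\dot b_{2d-2-x}$ by the growth rate of \emph{all} site-animals whose ratio is at least $2d-2-x$. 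This gap between $\partial P$ and $\partial_V P$ is the only real obstacle, and it is resolved by a monotonicity property of the bound in \Lr{improved bounds r}.

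First I would record the inclusion at the level of counting functions: every site-\pint\ counted by $\dot c_{n,2d-2-x,\epsilon}$ is a lattice site-animal $X$ with $|X|=n$ and $|\partial_V X|\geq (2d-2-x-\epsilon)n$. Partitioning such animals by the exact value $\ell:=|\partial_V X|$, of which there are at most $2dn$ possibilities (since $|\partial_V X|\leq (2d-2)n+2$, by the degree-sum argument used in \Tr{a Zd}), the union runs over polynomially many families, so its exponential growth rate equals the maximum of theirs. Each family with $\ell\approx (2d-2-x')n$ is controlled, via the finite-$n$ estimate established in the proof of \Lr{improved bounds r}, by
$$B(x'):=\dfrac{(2d-1)^{2d-1}}{y'^{\,y'}(1-y')^{1-y'}\,x'^{\,x'}(2d-1-x')^{2d-1-x'}},\qquad y'=\min\{x',1/2\},$$
for $x'\in[0,x]$; the overshoot $\ell>(2d-2)n$, which forces $x'=O(1/n)$, is absorbed into the window around ratio $2d-2$, whose growth rate is $1=B(0)$ by \Lr{improved bounds r}.

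The decisive step is then the monotonicity claim that $B$ is nondecreasing on $[0,1]$, which forces $\max_{x'\in[0,x]}B(x')=B(x)$ and yields $\dot b_{2d-2-x}\leq B(x)$, exactly the asserted bound. I would verify monotonicity by taking logarithms and differentiating separately on the two pieces $x'\in[0,1/2]$ and $x'\in[1/2,1]$, where the two defining formulas agree at $x'=1/2$: on $[1/2,1]$ one finds $\frac{d}{dx'}\log B(x')=\log\frac{2d-1-x'}{x'}>0$, and on $[0,1/2]$ one finds $\frac{d}{dx'}\log B(x')=\log\frac{(1-x')(2d-1-x')}{x'^{2}}>0$, both positive for $d\geq 2$. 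This routine computation, together with the continuity of $B$ needed to pass $\epsilon\to 0$ and then $n\to\infty$ in the slice estimate, is the only genuinely technical ingredient; it is the part most likely to require care. Finally, the ``in particular'' statement is immediate: specialising to $x=0$ gives $\dot b_{2d-2}\leq B(0)=1$, and the reverse inequality $\dot b_{2d-2}\geq 1$ was already noted (a straight path realises ratio $\approx 2d-2$), so $\dot b_{2d-2}=1$.
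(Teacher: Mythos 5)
Your proof is correct, and the subtlety you flag is genuine: since a site-\pint\ $P$ with $|\partial P|\approx (2d-2-x)n$ is only guaranteed to satisfy $|\partial_V P|\geq (2d-2-x)n$, the inequality $\dot b_{2d-2-x}\leq \dot a_{2d-2-x}$ does not follow from the definitions, so the corollary is not literally a specialisation of \Lr{improved bounds r}. The paper's own proof is the single sentence preceding the statement (``Since a site-\pint\ is also a lattice site-animal and $\partial P\subset \partial_V P$, we obtain\dots''), and its implicit justification is different from yours and shorter: the proof of \Lr{improved bounds r} never uses the upper bound $|\partial_V X|\leq (2d-2-x+\epsilon)n$ — only the lower bound enters, through the turn estimate $t\leq (x+\epsilon)n+2$ derived from \eqref{turns} — so the counting argument there in fact bounds the number of \emph{all} lattice site-animals of size $n$ whose vertex boundary is at least $(2d-2-x-\epsilon)n$, a class into which the site-\pint s counted by $\dot{c}_{n,2d-2-x,\epsilon}$ inject because $\partial P\subseteq \partial_V P$; the corollary then follows at once, with no slicing and no comparison between different ratios. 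Your alternative — partitioning by the exact value $\ell=|\partial_V X|$ into polynomially many families (using $\ell\leq (2d-2)n+2$) and proving that the bounding function $B$ is nondecreasing via $\frac{d}{dx'}\log B(x')=\log\frac{2d-1-x'}{x'}>0$ on $[1/2,1]$ and $\log\frac{(1-x')(2d-1-x')}{x'^2}>0$ on $[0,1/2]$ — is correct (the derivatives check out for $d\geq 2$, the two formulas agree at $x'=1/2$, and the $O(1/n)$ overshoot is harmless as you say), and it has the merit of working essentially from the per-ratio content of the lemma rather than from an inspection of which hypotheses its proof actually consumes; the cost is the extra monotonicity computation and the uniformity bookkeeping in the slice estimates, both of which the paper's one-sided reading of the lemma renders unnecessary. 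Your treatment of the ``in particular'' clause ($x=0$ gives $\dot b_{2d-2}\leq B(0)=1$, with $\dot b_{2d-2}\geq 1$ from the straight path, as remarked before \Lr{improved bounds r}) matches the paper.
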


\medskip

We are now ready to prove \Tr{improved bounds}.
\begin{proof}[Proof of \Tr{improved bounds}]
For every $0\leq x\leq 1$, we let $$g_d(x)=\dfrac{(2d-1)^{2d-1}}{y^y (1-y)^{1-y} x^x (2d-1-x)^{2d-1-x}},$$
where $y=\min\{x,1/2\}$.
It is not hard to see that there is a constant $C>0$ such that $x^{-x}\leq C$ for every $x\in [0,1]$, and 
$$\dfrac{1}{y^y(1-y)^{1-y}}\leq C$$
for every $y\in [0,1/2]$.
Moreover, for every $x\in [0,1]$ we have
$$\dfrac{(2d-1)^{2d-1}}{(2d-1-x)^{2d-1-x}}\leq \dfrac{(2d-1)^{2d-1}}{(2d-2)^{2d-1-x}}$$ 
by the monotonicity of $2d-1-x$ as a function of $x$,
and
$$\dfrac{(2d-1)^{2d-1}}{(2d-2)^{2d-1-x}}=\dfrac{2d-1}{(2d-2)^{1-x}}\Big(1+\dfrac{1}{2d-2}\Big)^{2d-2}\leq \dfrac{2d-1}{(2d-2)^{1-x}}e.$$
Thus, 
$$g_d(x)\leq C^2 e \dfrac{2d-1}{(2d-2)^{1-x}}.$$
Since $\dfrac{2d-1}{(2d-2)^{1-x}}$ is an increasing function of $x$, it follows by \Lr{improved bounds r} that for every $$x\leq z:=1-\dfrac{C^2}{\log\big(2d-2\big)}$$ we have
$$\dot{a}_{2d-2-x}(\Z^d)\leq g_d(x)\leq C^2 e\dfrac{2d-1}{(2d-2)^{1-x}}\leq C^2 e\dfrac{2d-1}{(2d-2)^{1-z}}=\\ C^2e^{1-C^2}(2d-1).$$
Using the standard inequality $e^{C^2}\geq 1+ C^2$ we obtain $e^{-C^2}\leq 1/(1+C^2)$, hence $$C^2e^{1-C^2}(2d-1)\leq \dfrac{C^2 e}{1+C^2}(2d-1).$$
Plugging $r=2d-2-z$ in \eqref{approx} we obtain $f(2d-2-z)=2de-5e/2+O(1/\log(d))$, and so 
\begin{align}\label{dot a}
\dot{a}_{2d-2-x}(\Z^d)< f(2d-2-z)
\end{align}
for every $d$ large enough.
On the other hand, for every $r\leq 2d-2-z$ we have $\dot{a}_r(\Z^d)\leq f(2d-2-z)$ by \eqref{a leq f}, \mymargin{this is where we use it} hence
$$\dot{a}(\Z^d)\leq f(2d-2-z)=2de-5e/2+O(1/\log(d))$$ by \eqref{equality} for every $d$ large enough (recall that lattice site-animals coincide with site-interfaces for a special choice of a basis of the cycle space), which proves our claim.
\end{proof}

Combining \Tr{improved bounds} with \eqref{bounds} yields the following lower bound for $\pcs(\Z^d)$:

\begin{theorem} \label{thm pc lower}
$\pcs(\Z^d)\geq \dfrac{1}{2d}+\dfrac{2}{(2d)^2}-O(1/d^2\log(d))$.
\end{theorem}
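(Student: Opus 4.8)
The plan is to run the method behind \eqref{bounds} in reverse: rather than feeding a bound on $\pcs(\Z^d)$ into the growth rate, I feed the upper bound on $\dot a(\Z^d)$ from \Tr{improved bounds} into the site version of \eqref{bounds} to extract a lower bound on $\pcs(\Z^d)$. Recall that this site version reads $\dot{a}(\Z^d) \geq f(r(\pcs(\Z^d)))$, where $f(r)=\frac{(1+r)^{1+r}}{r^r}$ is increasing and $r(p)=\frac{1-p}{p}=\frac1p-1$ is decreasing. Combining this with \Tr{improved bounds} gives
\begin{equation*}
f(r(\pcs(\Z^d))) \leq \dot{a}(\Z^d) \leq 2de - \frac{5e}{2} + O(1/\log(d)).
\end{equation*}

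First I would invert $f$. Writing $r_* := r(\pcs(\Z^d))$ and using the expansion \eqref{approx}, namely $f(r)=er+\frac e2 - O(1/r)$, the displayed inequality becomes $e r_* + \frac e2 - O(1/r_*) \leq 2de - \frac{5e}{2}+O(1/\log(d))$. A crude first pass (both error terms are $O(1)$) gives $r_*\leq 2d+O(1)$, so the internal correction satisfies $O(1/r_*)=O(1/d)$, which is dominated by $O(1/\log(d))$. Feeding this back and solving for $r_*$ then yields $r_* \leq 2d - 3 + O(1/\log(d))$.

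Finally I would translate this back to $\pcs(\Z^d)$. Since $\pcs(\Z^d)=\frac{1}{1+r_*}$ and $1+r_*\leq 2d-2+O(1/\log(d))$, we obtain $\pcs(\Z^d) \geq \frac{1}{2d-2+O(1/\log(d))}$. Factoring out $2d$ and expanding $\frac{1}{1-u}$ with $u = \frac1d - O(1/(d\log(d)))$ produces
\begin{equation*}
\pcs(\Z^d) \geq \frac{1}{2d} + \frac{1}{2d^2} - O\big(1/(d^2\log(d))\big) = \frac{1}{2d}+\frac{2}{(2d)^2} - O\big(1/(d^2\log(d))\big),
\end{equation*}
as claimed.

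The main obstacle is the asymptotic bookkeeping rather than any new idea. One must verify that inverting $f$ does not degrade the error term (the internal $O(1/r)=O(1/d)$ correction is swallowed by $O(1/\log(d))$), and that in the final Taylor expansion the genuine second-order term $\frac{1}{2d^2}=\frac{2}{(2d)^2}$ survives while the next correction is indeed $O(1/(d^2\log(d)))$. In particular one should check that the honest $\frac{1}{2d^3}$-type contribution is of smaller order than $\frac{1}{d^2\log(d)}$ as $d\to\infty$ (since $\frac{\log d}{d}\to 0$), so that it is safely absorbed into the error and the stated bound is correct.
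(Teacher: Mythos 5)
Your overall strategy is the paper's own: feed the upper bound of \Tr{improved bounds} back through the correspondence between growth rates and $f(r(\pcs))$, deduce $\dot{r}_d:=r(\pcs(\Z^d))\leq 2d-3+O(1/\log d)$, and then Taylor-expand $\frac{1}{2d-2+O(1/\log d)}=\frac{1}{2d}+\frac{2}{(2d)^2}-O(1/(d^2\log d))$; that last computation is exactly the one in the paper, and your bookkeeping there (including the check that the $1/d^3$-type term is absorbed since $\log d/d\to 0$) is correct. However, there is one genuine logical gap in your inversion of $f$. You write that the crude first pass gives $r_*\leq 2d+O(1)$, ``so the internal correction satisfies $O(1/r_*)=O(1/d)$''. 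This inference is backwards: an \emph{upper} bound on $r_*$ yields a \emph{lower} bound on $1/r_*$, not an upper one. To conclude $O(1/r_*)=O(1/d)$ you need $r_*\geq cd$, i.e.\ an a priori upper bound $\pcs(\Z^d)\leq C/d$, and the inequality $f(r_*)\leq \dot{a}(\Z^d)$ can never supply that, since it only pushes $r_*$ downwards. The gap is easy to repair: either invoke Hara--Slade's $\pcs(\Z^d)=\big(1+O(1/d)\big)/2d$, which the paper already uses in \Tr{a Zd} and which gives $r_*=2d-O(1)$; or argue by dichotomy, noting that if $r_*\leq d$ then $\pcs(\Z^d)\geq \frac{1}{d+1}$, which already exceeds the claimed bound for all large $d$, while if $r_*>d$ then $O(1/r_*)=O(1/d)=o(1/\log d)$ and your computation goes through.

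It is worth seeing how the paper's proof sidesteps this issue entirely: it never inverts $f$ asymptotically. Instead it stays at the level of the per-ratio growth rates, using the strict inequality \eqref{dot a}, which gives $\dot{a}_r<f(2d-2-z)\leq f(r)$ for every $r\geq 2d-2-z$ with $z=1-C^2/\log(2d-2)$, together with the exact equality $\dot{b}_{\dot{r}_d}(\Z^d)=f(\dot{r}_d)$ at criticality (the failure of exponential decay at $p_c$, i.e.\ equality in \eqref{b_r f_r ineq}). Since $f$ is increasing, this forces $\dot{r}_d\leq 2d-3+C^2/\log(2d-2)$ outright, with no appeal to \eqref{approx} at this stage and no need for any prior control on the magnitude of $\dot{r}_d$. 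Your route, once patched as above, is correct and only marginally lossier (the $O(1/\log d)$ term absorbs the extra errors), but note that it uses the statement of \Tr{improved bounds} as a black box, whereas the paper's argument quietly relies on the sharper per-ratio information \eqref{dot a} established inside that theorem's proof; this is the cleaner mechanism for converting an inequality between two quantities of size $\Theta(d)$ into the second-order coefficient $\frac{2}{(2d)^2}$.
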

\begin{proof}
It follows from \eqref{dot a} that $b_r< f(2d-2-z)\leq f(r)$ for every $r\geq 2d-2-z$, where $z=1-\dfrac{C^2}{\log\big(2d-2\big)}$ and $C$ is the constant in the proof of \Tr{improved bounds}. Since 
$b_{\dot{r}_d}(\Z^d)=f(\dot{r}_d)$, we obtain 
$$\dot{r}_d\leq 2d-3+\dfrac{C^2}{\log\big(2d-2\big)}.$$ Hence
$$\pcs(\Z^d)=\dfrac{1}{1+\dot{r}_d}\geq \dfrac{1}{2d-2+C^2/\log(2d-2)}.$$
It is not hard to see 
\begin{gather*}
\dfrac{1}{2d-2+C^2/\log(2d-2)}=\dfrac{1}{2d}+\dfrac{2-C^2/\log(2d-2)}{2d\big(2d-2+C^2/\log(2d-2)\big)}=\\ \dfrac{1}{2d}+\dfrac{2}{(2d)^2}-O(1/d^2\log(d)),
\end{gather*}
which proves the assertion.
\end{proof}

We remark that the well known inequality $\pcs(\Z^d)\geq p_c(\Z^d)$ \cite{Grimmett} and the asymptotic expansion $p_c(\Z^d)=\dfrac{1}{2d}+\dfrac{1}{(2d)^2}+O(1/d^3)$, mentioned in the previous section, give a weaker lower bound on $\pcs(\Z^d)$.

Recently, Barequet and Shalah \cite{BareShal} proved that $\dot{a}(\Z^3)<9.3835$. Plugging this into \eqref{bounds}, we deduce 
\labtequ{pc 3}{$\dot{p}_c(\Z^3)> r^{-1} \circ f^{-1}(9.3835) > 0.2522$.}
As far as we know, the best rigorous bound previously known was about $\dot{p}_c(\Z^3)>0.21225$, obtained as the inverse of the best known bound on the connective constant \cite{SAWCubic}\footnote{We thank John Wierman for this remark.}. 

\medskip
{\bf Remark:} In both \Tr{thm pc lower} and \eqref{pc 3} we made implicit use of \eqref{b_r f_r ineq}, but it would have sufficed to use its variant for site-lattice animals instead of interfaces. Thus adapting Delyon's \cite{DelTai} result to site-animals would have sufficed.

\bibliographystyle{plain}
\bibliography{collective}

\end{document}